\newtheorem{theorem}{Theorem}
\newtheorem{corollary}[theorem]{Corollary}
\newtheorem{conjecture}[theorem]{Conjecture}
\newtheorem{definition}[theorem]{Definition}
\newtheorem{proposition}[theorem]{Proposition}
\newtheorem{lemma}[theorem]{Lemma}
\begin{document}

\title{The two-dimensional small ball inequality and binary nets}
\author{Dmitriy Bilyk}
\address{School of Mathematics, University of Minnesota, Minneapolis, MN, 55455}
\email{dbilyk@math.umn.edu}
\author{Naomi Feldheim}
\address{Institute for Mathematics and its Applications, University of Minnesota, Minneapolis, MN, 55455}
\email{trinomi@gmail.com}

\maketitle

\begin{abstract}{In the current paper we present a new proof of the small ball inequality in two dimensions. More importantly, this new  argument, based on an approach inspired by lacunary Fourier series,  reveals the first formal connection between this inequality and discrepancy theory, namely the construction of two-dimensional binary nets, i.e. finite sets which are perfectly distributed with respect to dyadic rectangles. This relation allows one to generate all possible point distributions  of this type.  In addition, we outline a potential approach to the higher-dimensional small ball inequality by a dimension reduction argument. In particular this gives yet another proof of the two-dimensional signed (i.e. coefficients $\pm 1$) small ball inequality by reducing it to a simple one-dimensional estimate. However, we  show that  an analogous estimate fails to  hold for arbitrary coefficients.}
\end{abstract}

\section{Introduction}

The {\it{small ball inequality}} in harmonic analysis and {\it{discrepancy function estimates}} in discrepancy theory are known to be closely related to each other, see e.g. \cite{bl1}. However, so far this interplay was known only on a  heuristic level -- it manifested itself through common methods and arguments, but  no formal implications between the two facts have been available. In the present paper, we provide the first known formal connection between the small ball inequality and discrepancy theory. We give  a new proof of the two-dimensional version of the  small ball inequality in \S\ref{s.2d}, which in turn leads to  demonstrate  that  extremal sets of this inequality are precisely the binary nets -- a well-known class of low-discrepancy sets, see  \S\ref{s.nets}.

\subsection{Small ball inequality}
The {\it{small ball inequality}} is a lower bound for the supremum norm of ``hyperbolic'' sums of multiparameter Haar functions. Besides being interesting in its own right, it has important connections (both formal and heuristic) to approximation theory (metric entropy of mixed smoothness function classes), probability theory (small ball probabilities for the multiparameter Gaussian processes), and discrepancy theory (lower bounds for the sup-norm of the discrepancy function), see \cite{bilyk1,bilyk,bl1,talagrand,teml}.

To set the stage, let $\mathcal D$ denote the set of dyadic intervals in $[0,1)$, i.e. intervals of the form $\big[ m2^{-k}, (m+1)2^{-k}  \big)$, where $k\in \mathbb N_0$ and $m=0,1,...,2^k -1$. For a dyadic interval $I\in\mathcal D$, its left and right halves are also dyadic, and the Haar function associated to $I$ is defined as 
\begin{equation}
h_{I} (x) =  - {\mathbf 1}_{I_{left}} (x) + {\mathbf 1}_{I_{right}} (x).
\end{equation}
Notice that we normalize it in   $L^\infty$, not $L^2$.

In higher dimensions, we consider the family $\mathcal D^d$ of dyadic rectangles (boxes) in $[0,1)^d$  of the form $R= R_1 \times \dots \times R_d$ with $R_i \in \mathcal D$. The multiparameter Haar function associated to $R\in \mathcal D^d$ is then defined as a coordinatewise product of one-dimensional Haar functions:
\begin{equation}
h_R (x) = h_{R_1} (x_1) \cdot  \dots \cdot h_{R_d} (x_d ).
\end{equation}

The small ball conjecture deals with ``hyperbolic" linear combinations of Haar functions, i.e. sums of $h_R$ supported by rectangles of fixed volume $|R|=2^{-n}$. We shall use the symbol ``$\lesssim$" to mean ``less than a constant multiple of", i.e. $A \lesssim B$ means that there exists an absolute constant $C>0$ such that $A \le CB$. The implicit constant $C$  may depend on the dimension, but is independent of the scale $n$ and the choice of coefficients.

\begin{conjecture}\label{sbc} Let $d\ge 2$.  

 \noindent $\bullet$ {\bf{The small ball conjecture:}}  For each scale $n\in \mathbb N_0$ and any  coefficients $\alpha_R \in \mathbb R$ 
\begin{equation}\label{e.sbc}
n^{\frac{d-2}{2}} \cdot \bigg\| \sum_{R\in \mathcal D^d:\, |R| = 2^{-n} }  \alpha_R  h_R \bigg\|_\infty \gtrsim 2^{-n} \sum_{|R|=2^{-n} } |\alpha_R |.
\end{equation}

\noindent $\bullet$ {\bf{The signed small ball conjecture:}}  For each scale $n\in \mathbb N_0$ and any  coefficients $\varepsilon_R \in \{-1,+1\}$ 
\begin{equation}\label{e.ssbc}
 \bigg\| \sum_{R\in \mathcal D^d:\, |R| = 2^{-n} }  \varepsilon_R  h_R \bigg\|_\infty \gtrsim n^{\frac{d}2}.
\end{equation}

\end{conjecture}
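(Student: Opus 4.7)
Since Conjecture \ref{sbc} is open in dimensions $d \ge 3$, I focus on the two-dimensional case and follow a Riesz product argument. The first move is to group the sum by \emph{shape vector}: for each $\vec r = (r_1, r_2)$ with $r_1 + r_2 = n$, set
\[
f_{\vec r}(x) = \sum_{|R_i| = 2^{-r_i}} \alpha_R\, h_R(x).
\]
There are $n+1$ shape vectors, each $f_{\vec r}$ is constant on the partition of $[0,1)^2$ into dyadic rectangles of shape $\vec r$, and in the signed case ($\alpha_R = \pm 1$) one has $|f_{\vec r}| \equiv 1$. Write $F = \sum_{\vec r} f_{\vec r}$.

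The key step is the \emph{product lemma}: for any distinct shape vectors $\vec r_1, \ldots, \vec r_k$, the product $f_{\vec r_1} \cdots f_{\vec r_k}$ has mean zero. The crucial feature of two dimensions is that distinct shape vectors lie on the line $r_1 + r_2 = n$, hence have pairwise distinct first (and second) coordinates. After expansion, each summand is a product $h_{I_1}(x_1) \cdots h_{I_k}(x_1) \cdot h_{J_1}(x_2) \cdots h_{J_k}(x_2)$ where the intervals $I_j$ have pairwise distinct dyadic lengths. By dyadic nesting, $\prod_j h_{I_j}$ is either identically zero (some pair is disjoint) or equals $\pm h_{I_{\min}}$, which has zero mean; hence the integral vanishes.

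With the product lemma in hand, in the signed case form the Riesz product
\[
\Psi(x) = \prod_{r_1+r_2=n} \bigl(1 + \gamma f_{\vec r}(x)\bigr), \qquad 0 < \gamma < 1.
\]
Then $\Psi \ge 0$ pointwise, and the product lemma yields $\int \Psi = 1$. Expanding $\int F\, \Psi$ and using $f_{\vec r}^2 \equiv 1$, only the terms $\gamma \int f_{\vec r}^2 = \gamma$ survive (one per shape vector), so $\int F\, \Psi = \gamma(n+1)$, and duality gives $\|F\|_\infty \ge \int F\, \Psi \gtrsim n$, proving the signed conjecture. For arbitrary coefficients, replace $f_{\vec r}$ by $f_{\vec r}/\|f_{\vec r}\|_\infty$ in the Riesz product, localize to dyadic level sets of $|\alpha_R|$, and sum the resulting bounds to recover the $\ell^1$-type right-hand side.

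The main obstacle is the general-coefficient case: when $|f_{\vec r}|$ is not identically $1$, the Riesz product is no longer automatically nonnegative, and $\int f_{\vec r}^2$ is an $\ell^2$ quantity rather than the desired $\ell^1$ quantity. Resolving this requires a more delicate duality, which is presumably where the paper's lacunary-Fourier-series perspective enters. At a deeper level, the obstruction to $d \ge 3$ is that the product lemma fails: in three or more dimensions there exist triples of distinct-shape rectangles with $\int h_{R_1} h_{R_2} h_{R_3} \ne 0$, which blocks the straightforward Riesz product from being nonnegative with unit mean and explains why the higher-dimensional conjecture remains open.
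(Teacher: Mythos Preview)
Your Riesz-product argument for the signed case in $d=2$ is correct and is essentially Temlyakov's proof \cite{teml}, which the paper explicitly cites as one of the two prior proofs. Your treatment of the general-coefficient case is muddled, however: the right move is not to normalize by $\|f_{\vec r}\|_\infty$ or to pass to dyadic level sets, but simply to replace each $f_{\vec r}$ in the Riesz product by $g_{\vec r} = \sum_{|R_i|=2^{-r_i}} \operatorname{sgn}(\alpha_R)\, h_R$. Then $g_{\vec r}^2 \equiv 1$, the product lemma still applies, $\Psi = \prod_{\vec r}(1 + g_{\vec r}) \ge 0$ with $\int \Psi = 1$, and $\int F\,\Psi = \sum_{\vec r} \int f_{\vec r}\, g_{\vec r} = 2^{-n}\sum_R |\alpha_R|$ directly. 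So the obstacle you flag is not real; once the test function is set up correctly, the general case is no harder than the signed one.

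The paper's own proof is genuinely different. Rather than testing against a Riesz product, it pairs the symmetric shape vectors $(k,n-k)$ and $(n-k,k)$ into a single function $F_k$, observes that on every dyadic square of side $2^{-k}$ this $F_k$ has the fixed pattern $\{+2,0,0,-2\}$ on the four quarters (with $|\alpha_{R'}|+|\alpha_{R''}|$ in place of $2$ in the general case), and then \emph{iteratively} selects the sub-square on which $F_k$ is maximal, descending from $k=\lceil (n+1)/2\rceil$ to $k=n$. This produces an explicit collection of $2^{n+1}$ cubes of side $2^{-(n+1)}$ on which the full sum equals $n+1$ (respectively $\sum_{R\supset Q_j}|\alpha_R|$), and an averaging over these cubes gives \eqref{e.sbi2}. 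The payoff of this explicit construction --- and the main point of the paper --- is that the extremal set so obtained is precisely a binary $(0,n+1,2)$-net, and conversely every such net arises this way for some choice of signs. The Riesz-product proof is equally valid analytically but obscures this combinatorial structure; the paper does note, though, that the selected cubes coincide with the support of Temlyakov's Riesz product.
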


The point of interest in the conjecture is the precise exponent of $n$. Inequalities \eqref{e.sbc} and \eqref{e.ssbc} with $n^{\frac{d-1}{2}}$  (in place of $n^\frac{d-2}{2}$ or, respectively, $n^\frac{d}{2}$) hold even for the $L^2$ norm in place of $L^\infty$  and are, in fact, very simple consequences of the Cauchy--Schwartz inequality. The exponent $d-1$ is very natural in this setting as it is the number of free parameters imposed by the condition $|R|=2^{-n}$. The conjecture states that for the supremum norm one can gain a factor of $\sqrt{n}$ over the $L^2$ estimate. The sharpness of the conjecture may be demonstrated by choosing random coefficients.

The conjecture holds in dimension $d=2$, and we shall discuss this case in more detail in the next section. In higher dimensions only partial results are known \cite{bl,blv,blv1,blpv,karslidis}. It is easy to see that \eqref{e.ssbc} is a particular case of \eqref{e.sbc}. Although the  {\it{signed}} case previously did not seem to have direct applications to other problems, it is an important ``toy model"  of the problem, which presents significant structural simplifications, while preserving most important obstacles \cite{blv1,blpv}. In \S \ref{s.nets} we shall establish a formal link between  the two-dimensional signed small ball inequality and discrepancy theory: we demonstrate that the extremal sets generated by this estimate yield an important class of low-discrepancy distributions.

\subsection{Discrepancy theory and nets} Discrepancy function quantifies equidistribution of a finite set in the unit  cube. Let $\mathcal P_N $  be a set of $N$ points in $[0,1)^d$.  Its discrepancy function is defined as 
\begin{equation}
D_N ( x_1,...,x_d ) = \# \mathcal P_N \cap [0,x_1)\times...\times [0,x_d) - N x_1\cdot  ... \cdot  x_d,
\end{equation}
i.e. the difference between the true and expected numbers of points in the box $ [0,x_1)\times...\times [0,x_d)$.
The basic principle of {\it{irregularities of distribution}} states that various norms of this  function necessarily have to grow with $N$. The known bounds support this principle:

  \noindent $\bullet$ Roth \cite{R54}, Schmidt \cite{S77}: In all dimensions $d\ge 2$, for all $p\in(1,\infty)$, for any $\mathcal P_N \subset [0,1)^d$
\begin{equation}
\| D_N \|_p \gtrsim (\log N)^{\frac{d-1}{2}}.
\end{equation}
\noindent $\bullet$ Schmidt \cite{S72}: In  dimension $d= 2$, for any $\mathcal P_N \subset [0,1)^2$
\begin{equation}
\| D_N \|_\infty \gtrsim \log N.
\end{equation}
\noindent $\bullet$ Bilyk, Lacey, Vagharshakyan \cite{bl,blv}: In all  dimensions $d\ge 3$, there exists $\eta_d \in (0, 1/2) $ such that  for any $\mathcal P_N \subset [0,1)^d$
\begin{equation}
\| D_N \|_\infty \gtrsim (\log N)^{\frac{d-1}{2} + \eta_d}.
\end{equation}

The best known constructions of {\it{low-discrepancy sets}}, see e.g. \cite{DP}, satisfy 
\begin{equation}\label{e.ld}
\| D_N \|_\infty \lesssim (\log N)^{d-1}.
\end{equation}
Hence one can see that the sharp rate of growth is known only in dimension $d=2$. There is no consensus as to the conjecture about the right asymptotics of $\|D_N\|_\infty$ for $d\ge 3$ (sometimes referred to as {\it{star-discrepancy}}).
\begin{conjecture}
In dimensions $d\ge3$, for any $N$-point set $\mathcal P_N \subset [0,1)^d$ the sharp lower bound for the star-discrepancy is 
\begin{eqnarray}
\label{e.dc1} \|D_N \|_\infty & \gtrsim & (\log N)^{d-1} \quad \textup{ or}\\
\label{e.dc2} \| D_N \|_\infty & \gtrsim  & (\log N)^{d/2}.
\end{eqnarray}
\end{conjecture}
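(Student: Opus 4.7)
The final statement is a famous open conjecture rather than a theorem, so a genuine ``proof sketch'' is out of reach; what I can offer is a strategy consistent with the paper's theme, namely leveraging the (also open) small ball inequality via the connection developed in \S\ref{s.nets}. The plan is to deduce the lower bound \eqref{e.dc2} from a proof of the signed small ball inequality \eqref{e.ssbc} via a Halász-type duality argument: fix an $N$-point set $\mathcal P_N$ with $N \approx 2^n$, and construct a dual test function of Riesz-product type
\[
\Phi = \prod_{k=1}^{d} \bigl(1 + \gamma\, F_k\bigr) - 1, \qquad F_k = \sum_{\vec r:\, |\vec r|=n} \varepsilon_{\vec r}^{(k)} f_{\vec r}^{(k)},
\]
where the $f_{\vec r}^{(k)}$ are ``$r$-functions'' (sums of Haar functions supported on dyadic boxes of a fixed shape $\vec r$) and the signs $\varepsilon_{\vec r}^{(k)}$ are chosen so that $\langle D_N, \Phi\rangle$ is large. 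Expanding $\Phi$ and collecting the top-order term would then put one in the range of the signed small ball conjecture, whose $n^{d/2}$ bound translates through this pairing to $\|D_N\|_\infty \gtrsim (\log N)^{d/2}$, i.e.\ \eqref{e.dc2}.

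For the stronger alternative \eqref{e.dc1}, I would attempt the following two-step refinement, which is the route that has driven the partial results \cite{bl,blv,blpv}: first, replace the Riesz product by a ``smoothed'' product whose lower-order interaction terms can be controlled via Littlewood--Paley and exponential moment (subgaussian) estimates for hyperbolic Haar sums; second, exploit the particular product structure coming from the binary nets identified in \S\ref{s.nets}---the paper shows that the extremal sets in 2D are precisely binary nets, and one may hope that the corresponding higher-dimensional extremal configurations carry enough arithmetic structure (digit rules reminiscent of $(t,m,d)$-nets) that one can push the bound from $(\log N)^{d/2}$ up to the combinatorial upper bound $(\log N)^{d-1}$ by summing contributions shape-by-shape over the full Mikhlin-type simplex $\{\vec r : |\vec r|=n\}$, which has $\binom{n+d-1}{d-1} \asymp n^{d-1}$ elements.

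Either branch of the plan would be carried out by: (i) isolating the hyperbolic Haar projection of $D_N$ and reducing the whole estimate to a Haar-coefficient problem, (ii) applying the paper's new lacunary-Fourier-inspired argument from \S\ref{s.2d} in each coordinate to handle one scale at a time, and (iii) using the dimension reduction outlined in the abstract to convert the remaining $(d-1)$-parameter estimate to a one-parameter inequality. The main obstacle, which is the reason the conjecture is still open, is step (ii)/(iii): the authors themselves observe that their one-dimensional reduction works for signed coefficients but \emph{fails} for arbitrary real coefficients, and it is precisely arbitrary coefficients that appear when one unfolds the interaction terms of the Riesz product. Overcoming this gap---equivalently, proving the full small ball inequality \eqref{e.sbc} in dimensions $d\ge 3$---appears to require a genuinely new idea beyond what is presented here, and without it one can only hope to recover the partial bound $(\log N)^{\frac{d-1}{2}+\eta_d}$ already known from \cite{bl,blv}.
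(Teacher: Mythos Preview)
You correctly identify that this is an open conjecture; the paper offers no proof, only motivation. So there is nothing in the paper to compare your attempt against, and your decision to sketch a strategy rather than a proof is the only honest option.

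That said, the strategy itself has two concrete problems. First, your Riesz product is built incorrectly: you take a product over coordinate indices $k=1,\dots,d$ with each factor $F_k$ already summing over \emph{all} shapes $\vec r$ with $|\vec r|=n$. In the Hal\'asz/Temlyakov construction the product runs over the \emph{shape} (or scale) index, with each factor being a single $r$-function $f_{\vec r}$; that is what makes the expansion behave and what ties the test function to a hyperbolic Haar sum. Your product, as written, does not expand into a sum over $|R|=2^{-n}$ and would not pair usefully with $D_N$. Second, even granting the correct Riesz product, the step ``the $n^{d/2}$ bound of \eqref{e.ssbc} translates through this pairing to $\|D_N\|_\infty \gtrsim (\log N)^{d/2}$'' is precisely the implication the paper says is \emph{not} known: the text states explicitly that no formal implications between the small ball inequality and the discrepancy bounds have been proved, and the connection established in \S\ref{s.nets} goes in a different direction (extremal sets of the two-dimensional signed inequality are nets), not from \eqref{e.ssbc} to \eqref{e.dc2}. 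So your plan assumes the very bridge whose absence the paper is lamenting.

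Your closing paragraph is accurate in spirit: the dimension-reduction idea collapses for arbitrary coefficients (\S\ref{s.1dfalse}), and this is indeed the obstruction. But the earlier parts of the plan should not be presented as if they would go through once \eqref{e.ssbc} is proved; at present they would not.
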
  
While the first conjecture \eqref{e.dc1} is supported by the bounds for the best known low-discrepancy sets \eqref{e.ld}, the second one \eqref{e.dc2} stems from the {\it{small ball inequality}} \eqref{e.sbc}: in the {\it{signed}} case \eqref{e.ssbc} the similarity becomes especially apparent. While no formal implications have been proved, hyperbolic sums of Haar functions serve as a model for $D_N$, and \eqref{e.sbc} can be viewed as the linear term in the discrepancy estimates, see e.g. \cite{bilyk1,bilyk,bl1} for an extensive discussion.

An important class of examples  of low-discrepancy sets is given  by the so-called nets. 
\begin{definition} Let  $t\ge 0$, $m\ge 0$, $d\ge 1$ be integers. A finite set $\mathcal P\subset [0,1)^d$ of $N=2^m$ points is called a dyadic or binary $(t,m,d)$-net if every dyadic box of volume $2^{t-m}$ contains precisely $2^t$ points of $\mathcal P$. 
Similarly, nets can be defined in other integer bases $b\ge 2$ besides binary.
\end{definition}

The parameter $t$ is called  {\it{deficiency}} or the quality parameter of the net. The case $t=0$ is the most interesting: in this situation every $b$-adic box of size $\frac1{N}$ contains exactly one point of $\mathcal P$. However, it is well known that perfect nets with $t=0$ exist only when $b\ge d-1$. In particular, perfect dyadic nets exist only in dimension two and three. Nevertheless, in every dimension $d\ge 2$ and in every base $b \ge 2$, there exist nets with deficiency depending  only on the dimension. Such nets by definition have  discrepancy zero with respect to $b$-adic boxes of size comparable to $\frac1{N}$. This fact can be extrapolated to arbitrary rectangles to show that these sets satisfy \eqref{e.ld} $\| D_N \|_\infty \lesssim (\log N)^{d-1} \approx m^{d-1}$, where the implied constant depends only on $d$, $b$, and $t$. For encyclopedic information about nets the reader is referred to the recent book \cite{DP}.

In this paper we look more closely at the $(0,m,2)$-nets in base $b=2$, i.e. two-dimensional perfect dyadic nets with $N=2^m$ points. In \S \ref{s.nets} we shall show that all such nets arise precisely as extremal sets of the two-dimensional signed small ball inequality. This characterizes all two-dimensional $(0,m,2)$-nets and gives an easy way to count the total number of such nets. In addition, this provides another important (and this time formal) connection  between the small ball conjecture and discrepancy estimates. Our construction also easily generalizes to $b$-adic nets.

We shall restrict our attention to nets, in which all points have coordinates of the form $k b^{-m}$, where $k=0,1,..,b^{-m}-1$. In fact, every $(0,m,2)$-net can be transformed into a net of this type using the map $x \rightarrow b^{-m} \lfloor{b^m x} \rfloor$. We shall not distinguish the nets which are mapped into the same set by this transformation and shall treat them as equivalent.

We finish the introduction by describing some standard constructions of nets in dimension $d=2$. Perhaps the best known example is  the famous ``digit-reversing" Van der Corput set with $N=2^{m }$ points. This set consists of all $2^{m}$ points of the following form
 \begin{equation}\label{e.vdc}
 \big( 0.x_1 x_2 \dots x_{m-1} x_{m}, \, 0.x_{m} x_{m-1} ... x_2 x_1 \big),
 \end{equation}
 where the coordinates are written in the binary form, i.e. the digits $x_i=0$ or $1$. It is easy to see that each dyadic rectangle $R$ of area $|R|=2^{-m}$ contains precisely one point of this set, i.e. it is a $(0,m,2)$-net in base $b=2$, and $b$-adic modifications are obvious.
 
 A simple standard modification of the dyadic  Van der Corput set that preserves the net property is the digit-shift defined as follows. Fix $\sigma_k \in \{0,1\}$, $k=1,2,\dots,m$. The digit-shifted Van der Corput set consists of $2^{m}$ points of the form 
 \begin{equation}\label{e.dsvdc}
 \big( 0.x_1 x_2 \dots x_{m-1} x_{m}, \, 0.y_{m} y_{m-1} ... y_2 y_1 \big),
 \end{equation}
 where $y_k = x_k + \sigma_k (\textup{mod } 2)$. In other words, after reversing the order of the digits, we also change  the digits in a fixed set of positions (those where $\sigma_k=1$). More general modifications (digit-scrambling) are also used.

\section{The two-dimensional small ball inequality}\label{s.2d}


The small ball conjecture \eqref{e.sbc} is known to be true  in dimension $d=2$. There are at least  two previous proofs of this fact: one due to M.~Talagrand \cite{talagrand}, and one by V.~Temlyakov \cite{teml}, both dating from the early 1990s. The latter proof uses Riesz products -- a technique which was originally used in the proof of Sidon's theorem on lacunary Fourier series, which has an obvious similarity to the small ball inequality, see \S \ref{s.lac}. In this section we present a new proof of this  inequality.

\begin{theorem}[The small ball inequality]\label{t.sbi2}
In dimension $d=2$, for any scale $n\in N_0$ and any coefficients $\alpha_R \in \mathbb R$, the following inequality holds
\begin{equation}\label{e.sbi2}
\bigg\|  \sum_{|R| = 2^{-n} } \alpha_R h_R \,\, \bigg\|_\infty \ge 2^{-n}  \sum_{|R| = 2^{-n} } \big| \alpha_R \big|.
\end{equation}
\end{theorem}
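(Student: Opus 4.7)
My approach is a duality argument: I will construct a nonnegative test function $\Phi:[0,1)^2\to[0,\infty)$ with $\int_{[0,1)^2}\Phi = 1$ and $\int f\,\Phi = 2^{-n}\sum_R|\alpha_R|$, where $f := \sum_R\alpha_R h_R$ and the sum runs over $R\in\mathcal D^2$ with $|R|=2^{-n}$. Since $\Phi\ge 0$, the trivial bound $\int f\,\Phi \le \|f\|_\infty\,\|\Phi\|_1$ will give
\[
\|f\|_\infty \;\ge\; \frac{\int f\,\Phi}{\int \Phi} \;=\; 2^{-n}\sum_R|\alpha_R|,
\]
which is precisely \eqref{e.sbi2} with sharp constant $1$.

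The test function is a two-dimensional hyperbolic Riesz product, inspired by Temlyakov's approach. Set $\epsilon_R := \mathrm{sign}(\alpha_R)$ (arbitrarily when $\alpha_R=0$), and for each $k\in\{0,1,\ldots,n\}$ group the rectangles by shape ($|R_1|=2^{-k}$, $|R_2|=2^{k-n}$):
\[
\rho_k(x,y) \;:=\; \sum_{\substack{|R|=2^{-n}\\|R_1|=2^{-k}}}\epsilon_R\,h_R(x,y).
\]
Because every $(x,y)$ lies in exactly one rectangle of each shape and $h_R$ equals $\pm 1$ there, $\rho_k$ takes values in $\{-1,+1\}$ off a null set of dyadic boundaries. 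I then define
\[
\Phi(x,y) \;:=\; \prod_{k=0}^{n}\bigl(1+\rho_k(x,y)\bigr),
\]
which is automatically nonnegative, each factor being $0$ or $2$.

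The core computation is to evaluate $\int\Phi$ and $\int h_R\,\Phi$. Expanding $\Phi = \sum_{S\subseteq\{0,\ldots,n\}}\prod_{k\in S}\rho_k$ reduces these to integrals of the form $\int\prod_{k\in S}h_{R_k}$ and $\int h_R\prod_{k\in S}h_{R_k}$, where each $R_k$ ranges over rectangles of shape $k$. I would establish, by a coordinatewise Haar-cancellation argument, the orthogonality lemma that such an integral vanishes unless every Haar factor is perfectly paired. The hyperbolic nature of the sum is decisive: increasing the shape index $k$ \emph{shrinks} the $x$-side of a rectangle but \emph{enlarges} the $y$-side, so any unpaired shape leaves, in at least one coordinate, a one-dimensional Haar whose support either strictly contains or is strictly contained in the integration window, forcing the integral to zero. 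Consequently only $S=\emptyset$ contributes to $\int\Phi$, giving $\int\Phi=1$, and only $S=\{k_0\}$ with $R_{k_0}=R$ contributes to $\int h_R\,\Phi$, giving $\epsilon_R|R|=\epsilon_R\cdot 2^{-n}$. Summing $\alpha_R$ times this over $R$ yields $\int f\,\Phi = 2^{-n}\sum_R\alpha_R\epsilon_R = 2^{-n}\sum_R|\alpha_R|$, as required.

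The main obstacle is precisely the orthogonality lemma: one has to verify carefully that the two-dimensional freedom in the Riesz product does not generate unexpected nonzero higher-order terms, and it is the hyperbolic bookkeeping between $x$- and $y$-scales that saves the day. Everything else is either a direct calculation or immediate duality. As a pleasing byproduct, the support $\{\Phi>0\}$ has measure $2^{-n-1}$ and its natural decomposition into dyadic cells provides the promised connection to binary $(0,n,2)$-nets: selecting one representative point per cell gives a $2^n$-point set whose sign pattern against the Haar functions is exactly $\{\epsilon_R\}$, which is the bridge to the net characterization of \S\ref{s.nets}.
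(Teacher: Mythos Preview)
Your argument is correct and yields the inequality with the sharp constant; it is exactly Temlyakov's Riesz-product proof, which the paper cites as one of the two previously known proofs but deliberately does \emph{not} reproduce. The paper's own proof is different: it pairs the shape $k$ with the symmetric shape $n-k$, sets $F_k=\sum_{R\in\mathcal D^2_k}\alpha_R h_R+\sum_{R\in\mathcal D^2_{n-k}}\alpha_R h_R$, and then runs a purely geometric induction from $k=\tfrac{n+1}{2}$ upward, at each step retaining the quarter-subcube of each current cube on which $F_k=|\alpha_{R'}|+|\alpha_{R''}|$. The final averaging step over the $2^{n+1}$ surviving cubes gives \eqref{e.sbi2}. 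The paper remarks that the cubes it selects are precisely the support of your $\Phi$, so the two proofs land on the same extremal set; what the paper's approach buys is that the net property is \emph{seen}, not computed: the induction makes it visually evident that every dyadic rectangle of area $2^{-n}$ contains exactly two chosen cubes in opposite quarters, which is the direct bridge to the $(0,n+1,2)$-net characterization. Your duality argument, by contrast, hides this geometry inside the orthogonality lemma. One small slip in your closing remark: the support of $\Phi$ decomposes into $2^{n+1}$ (not $2^{n}$) dyadic squares of side $2^{-(n+1)}$, and the resulting set is a $(0,n+1,2)$-net, not a $(0,n,2)$-net.
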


We shall actually  first give a proof of its weaker variant -- the {\emph{signed small ball inequality}}, see \eqref{e.ssbc}, i.e. 
a version of \eqref{e.sbi2} with all coefficients equal to $\pm 1$. Observe that if we fix the area of  a two-dimensional dyadic rectangle $|R|=2^{-n}$, there are exactly  $n+1$ different possible  shapes that such a rectangle may have. Indeed, the first side may have length $|R_1| = 2^0$, $2^{-1} $, ..., $2^{-n}$, and the length of the second side is determined automatically.  Therefore in this case $ \displaystyle{  \sum_{|R| = 2^{-n} } \big| \alpha_R \big|  = 
2^n (n+1) }$, and Theorem \ref{t.sbi2} becomes:

\begin{theorem}[The signed small ball inequality]\label{t.ssbi2}
In dimension $d=2$, for any scale $n\in N_0$ and any coefficients $\varepsilon_R = \pm 1$, the following inequality holds
\begin{equation}\label{e.ssbi2}
\bigg\|  \sum_{|R| = 2^{-n} } \varepsilon_R h_R \,\,  \bigg\|_\infty = n+1.
\end{equation}
\end{theorem}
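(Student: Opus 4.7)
The plan is to prove the upper bound $\le n+1$ by a simple pointwise counting argument and the matching lower bound by explicitly constructing a single point at which all $n+1$ Haar terms align in sign.

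First, the upper bound. Rectangles $R\in \mathcal D^2$ with $|R|=2^{-n}$ come in exactly $n+1$ shapes, one for each choice of $|R_1|=2^{-k}$ with $k\in\{0,1,\dots,n\}$. For each fixed shape, the rectangles of that shape tile $[0,1)^2$, so at any $x$ only one Haar function of that shape is nonzero, and it contributes $\pm 1$. Hence $|\sum_R \varepsilon_R h_R(x)|\le n+1$ for every $x$, with equality iff the $n+1$ contributions at $x$ all share the same sign.

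For the lower bound I would parameterize $x=(x_1,x_2)$ by its binary expansions $x_1=0.a_1a_2a_3\cdots$ and $x_2=0.b_1b_2b_3\cdots$. If $R$ has shape $k$ and contains $x$, then $R_1$ is encoded by the first $k$ digits $a_1,\dots,a_k$ and $R_2$ by $b_1,\dots,b_{n-k}$; moreover the sign of each one-dimensional Haar at $x$ is determined by the next digit along that axis, yielding
\[
h_R(x)=(2a_{k+1}-1)(2b_{n-k+1}-1).
\]
This formula exposes a triangular dependency: the equation demanding $\varepsilon_R h_R(x)=+1$ for the shape-$k$ rectangle involves a single new $a$-digit, $a_{k+1}$, alongside a $b$-digit that can be fixed in advance. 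I would therefore set $x_2=0$ (so every factor $(2b_{n-k+1}-1)=-1$) and construct $a_1,a_2,\dots,a_{n+1}$ by recursion on $k=0,1,\dots,n$. At step $k$, the previously chosen $a_1,\dots,a_k$ together with the $b$-digits fully determine the shape-$k$ rectangle $R^{(k)}\ni x$, and hence the sign $\varepsilon_{R^{(k)}}$; I then pick $a_{k+1}\in\{0,1\}$ so that $(2a_{k+1}-1)(-1)=\varepsilon_{R^{(k)}}$. This choice is always available, so the recursion never stalls, and the resulting $x$ satisfies $\sum_R \varepsilon_R h_R(x)=n+1$.

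The only genuine subtlety is conceptual rather than computational: one must recognize the triangular structure that lets the digits of $x_1$ be selected one at a time with no conflicts. Once that is seen, the argument is a single pass of recursion, and everything else is routine bookkeeping. As a bonus, the construction naturally foreshadows the connection with binary nets to be exploited in \S\ref{s.nets}: varying the free parameters (the coordinate kept fixed, and the base point chosen there) should parameterize whole families of extremal point sets, and the extremal set itself should turn out to be a $(0,m,2)$-net.
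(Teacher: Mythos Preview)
Your argument is correct. The upper bound is the same triviality the paper records, and your lower-bound recursion is sound: with $x_2=0$ fixed, the rectangle of shape $k$ containing $x$ is determined by $a_1,\dots,a_k$ (and the already known $b$-digits), so $\varepsilon_{R^{(k)}}$ is known at step $k$ and you are free to choose $a_{k+1}$ to force the sign. No step depends on a digit not yet chosen.

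However, this is not the paper's primary proof of Theorem~\ref{t.ssbi2}. The paper pairs the symmetric shapes $k$ and $n-k$ into a single function $F_k$ that, on each dyadic \emph{square} of side $2^{-k}$, takes the values $+2,0,0,-2$ on its four quarters; it then runs a two-dimensional induction on squares, selecting at each stage the quarter where $F_k=+2$. This produces not a single extremal point but the entire extremal set of $2^{n+1}$ little squares simultaneously, which is exactly what is needed for the general-coefficient version (Theorem~\ref{t.sbi2}) and, crucially, for the net characterization in \S\ref{s.nets}. Your approach, by contrast, is essentially the dimension-reduction argument that the paper gives later in \S4 (Lemma~\ref{l.reduce} together with the one-dimensional estimate~\eqref{e.s1d}): freezing one coordinate collapses the sum to a one-dimensional nested-interval problem, which your digit recursion solves. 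So you have rediscovered the paper's \emph{second} proof, phrased in binary digits.

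The closing remark about parameterizing nets overreaches a bit. Your construction yields one extremal point per fixed $x_2$, and running it over all $2^{n+1}$ vertical strips would eventually recover the net, but that this set is a $(0,n+1,2)$-net is not visible from your argument alone; the ``opposite-quarters'' structure that forces the net property comes out of the paper's symmetric-pairing proof, not the one-coordinate recursion.
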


Obviously, at each point $x\in [0,1]^d$ the sum on the left-hand side contains exactly $n+1$ terms equal to $\pm 1$, so the upper bound  trivially holds, and the lower bound is the core of the matter (hence we keep the name ``inequality" in  \eqref{e.ssbi2}). 
Simple combinatorial structure of the signed case makes the argument very elegant and the idea of our proof particularly transparent. 

\subsection{Proof of the signed small ball inequality (Theorem \ref{t.ssbi2}).} We shall assume that $n$ is odd, the other case being completely analogous. For $k=0$, $1$,..., $n$ define
\begin{equation}
\mathcal D^2_k = \{ R = R_1 \times R_2 \in \mathcal D^2:\, |R_1|=2^{-k},\, |R_2| = 2^{-(n-k)} \},
\end{equation}
i.e. this is the set of  $2^{-k}$-by-$2^{-(n-k)}$ dyadic boxes.\\

For each $k=\frac{n+1}2$,..., $n-1$, $n$, let us set
\begin{equation}
F_k (x) = \sum_{R\in \mathcal D^2_k} \varepsilon_R h_R(x)  + \sum_{R\in \mathcal D^2_{n-k}} \varepsilon_R h_R(x),
\end{equation}
i.e. it is the part of our sum which contains rectangles of dimensions $2^{-k} \times 2^{-(n-k)}$ and the symmetric $2^{-(n-k)} \times 2^{-k}$. In particular, $F_k$ is constant on dyadic squares of side length $2^{-(k+1)}$.

Moreover, one can easily see that on each dyadic square of side length $2^{-k}$, up to rotations and reflections, the function $F_k$ has the form shown in Fig. \ref{f.2002}, i.e. it takes values $2$ and $-2$ in two opposite quarters and zero in the other two.

\begin{figure}
\begin{tikzpicture}
\draw (0,0) rectangle (4,2);
\draw (2,0)--(2,2);
\draw (0,1)--(4,1);
\draw[red,dashed, ultra thick] (0,0) rectangle (2,2);
\node at (1,0.5) {-1};
\node at (1,1.5) {1};
\node at (3,0.5) {1};
\node at (3,1.5) {-1};
\draw (4.8,1)-- (5.2,1);
\draw (5,0.8) -- (5,1.2);
\end{tikzpicture}
\hspace{0.4cm}
\begin{tikzpicture}
\draw (0,0) rectangle (2,4);
\draw (0,2)--(2,2);
\draw (1,0)--(1,4);
\draw[red,dashed, ultra thick] (0,0) rectangle (2,2);
\node at (0.5,1) {-1};
\node at (1.5,1) {1};
\node at (0.5,3) {1};
\node at (1.5,3) {-1};
\draw [->,
line join=round, decorate, decoration={
    zigzag,
    segment length=4,
    amplitude=.9,post=lineto,
    post length=4pt
}]  (2.5,1) -- (3,1);
\end{tikzpicture}
\hspace{0.3cm}
\begin{tikzpicture}
\filldraw[gray!30!white]  (1,1)rectangle (2,2);
\draw (0,0) grid (2,2);
\draw[red,dashed, ultra thick] (0,0) rectangle (2,2);
\node at (0.5,0.5) {-2};
\node at (1.5,0.5) {0};
\node at (.5,1.5) {0};
\node at (1.5,1.5) {2};
\end{tikzpicture}
\caption{
The sum of two Haar functions of symmetric rectangles determines the values of 
$F_k$ on a dyadic cube of scale  $2^{-k}$.
Since each Haar function is multiplied by $1$ or $-1$ in the sum composing $F_k$, this square may be rotated and reflected.
The chosen sub-square, on which $F_k$ takes the value $2$, is shaded. \label{f.2002} }
\end{figure}
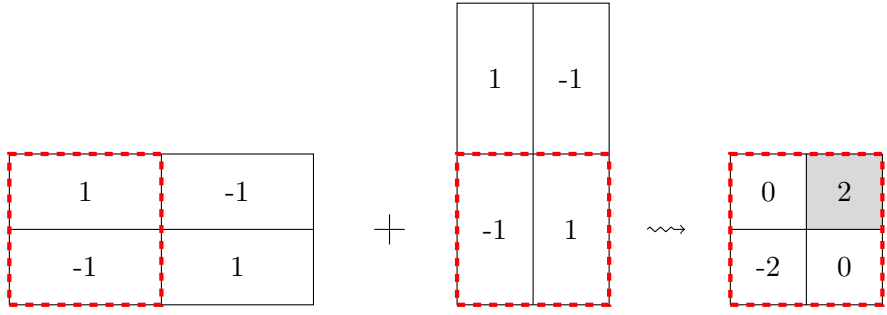


We start with $k=\frac{n+1}{2}$ and keep the cubes of scale $2^{-(k+1)}$ on which $F_k = 2$, discarding all the others. Notice that there are exactly $2^{n+1}$ such cubes.

We then proceed inductively increasing $k$ by one at each step.  On every previously chosen cube of scale $2^{-(k+1)}$ the function $F_{k+1}$ takes values $0$, $-2$, and $2$, as in Figure \ref{f.2002}. We choose the subcubes on which $F_{k+1} = 2$ and move on to the next step.

In the end we have found $2^{n+1}$ cubes of area $2^{-2(n+1)}$ on which $F_k = 2$ for all $k=\frac{n+1}2$,..., $n-1$, $n$. Obviously, on each such cube
\begin{equation}
\sum_{|R| = 2^{-n} } \varepsilon_R h_R = \sum_{k=\frac{n+1}{2}}^n F_k  = n+1,
\end{equation}
which implies \eqref{e.ssbi2}.    $\square$

\subsection{Proof of the  small ball inequality for general coefficients (Theorem \ref{t.sbi2})} The proof in the  case of arbitrary coefficients, inequality \eqref{e.sbi2}, requires minimal modifications and an additional simple observation.\\

For each $k=\frac{n+1}2$,..., $n-1$, $n$, we similarly define
\begin{equation}
F_k (x) = \sum_{R\in \mathcal D^2_k} \alpha_R h_R(x)  + \sum_{R\in \mathcal D^2_{n-k}} \alpha_R h_R(x).\end{equation}

On each dyadic cube $Q$ of side length $2^{-k}$ the structure of $F_k$ can be described as follows. Let $R' \in \mathcal D^2_k$ and $R'' \in \mathcal D^2_{n-k}$ be such that $R' \cap R'' = Q$. Then $F_k$ equals $\big|\alpha_{R'} \big| + \big|\alpha_{R''} \big|$ in one quarter of $Q$, $ - \big( \big|\alpha_{R'} \big| +\big|\alpha_{R''} \big|\big)$ in the opposite quarter, and the values on the remaining two sub cubes are immaterial. 

Just as in the signed case, starting from $k=\frac{n+1}2$ we proceed inductively, at each step keeping the subcubes on which $F_k$ takes values $\big|\alpha_{R'} \big| + \big|\alpha_{R''} \big|$.

We make the following additional observation. At the initial step $k=\frac{n+1}2$ each rectangle $R\in \mathcal D^2_k$ or $\mathcal D^2_{n-k}$ contains exactly {\emph{two}} chosen cubes: indeed, every such $R$ consists of exactly two cubes of side length $2^{-k}$, and each of them in turn contains one chosen subcube. Moreover, because of the structure of the Haar function and the fact that we were choosing cubes on which $\alpha_R h_R (x) \ge 0$, these two cubes lie in opposite quarters of $R$.

This guarantees that at each following step dyadic boxes also contain exactly two chosen boxes in opposite quarters, because any rectangle $R\in \mathcal D^2_{k+1}$ intersects exactly two rectangles from $\mathcal D_k^2$ and will thus contain one previously chosen cube in each. We shall further choose a  subcube in each of those, which will in turn have to lie in opposite quarters of $R$ (see Figure~\ref{f.chosen}).

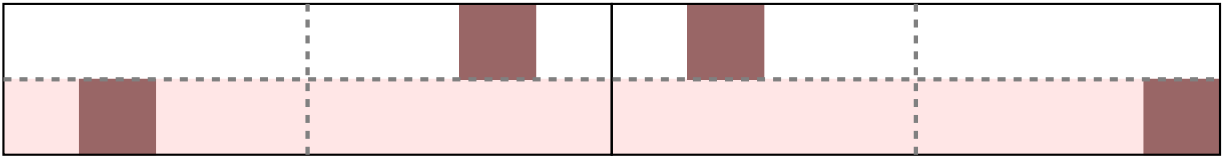
\begin{figure}
\begin{tikzpicture}
\filldraw[red!10!white] (0,0)rectangle (16,1);
\filldraw[gray!80!red] (1,0) rectangle (2,1);
\filldraw[gray!80!red] (15,0) rectangle (16,1);
\filldraw[gray!80!red] (9,1) rectangle (10,2);
\filldraw[gray!80!red] (6,1) rectangle (7,2);
\draw[thick] (0,0) rectangle (8,2);
\draw[thick] (8,0) rectangle (16,2);
\draw[gray,dashed,ultra thick] (4,0)--(4,2);
\draw[gray,dashed,ultra thick] (0,1)--(16,1);
\draw[gray,dashed,ultra thick] (12,0)--(12,2);

\end{tikzpicture}
\caption{An induction step in showing that each rectangle $R$ contains precisely two chosen squares, lying in two opposite quarters. \label{f.chosen} }
\end{figure}
In the end we will have chosen $2^{n+1}$ cubes $Q_1$, $Q_2$,..., $Q_{2^{n+1}}$ of area $2^{-2(n+1)}$ such that on each cube $Q_j$ we have
\begin{equation}
\sum_{|R|=2^{-n}} \alpha_R h_R (x) = \sum_{ R \supset Q_j} \big| \alpha_R \big|.
\end{equation}
In addition, every dyadic box $R$ with $|R|=2^{-n}$ contains precisely two cubes $Q_j$.

Estimating the maximum by the average we easily obtain 
\begin{align}
\bigg\|  \sum_{|R| = 2^{-n} } \alpha_R h_R \,\, \bigg\|_\infty & \ge \max_{j=1,\dots,2^{n+1}} \sum_{ R \supset Q_j} \big| \alpha_R \big|\\
\nonumber & \ge \frac{1}{2^{n+1}} \sum_{Q_j} \sum_{ R \supset Q_j} \big| \alpha_R \big|
 \,\,\, = \,\,\, \frac{1}{2^{n+1}} \sum_{R}  \big| \alpha_R \big|  \sum_{  Q_j \subset R} 1\\
 \nonumber & = 2^{-n} \sum_{R: |R| = 2^{-n}}  \big| \alpha_R \big|,
\end{align}
which proves \eqref{e.sbi2}. \,\,\, $\square$

\subsection{Similarities to lacunary Fourier series}\label{s.lac}
One  needs not look very scrupulously in order to notice the similarity of the small ball inequality \eqref{e.sbi2} to Sidon's theorem on lacunary Fourier series. 

\begin{theorem}[Sidon, 1927 \cite{sidon}]\label{t.sidon}
Let the sequence $\{n_k\}\subset \mathbb N$ be lacunary, i.e. there exists $\lambda >1$ such that for each $k\in \mathbb N$ we have $\displaystyle{\frac{n_{k+1}}{n_k} \ge \lambda}$. Then there exists a constant $C>0$ such that for any real coefficients $\alpha_k$, the following inequality holds
\begin{equation}\label{e.sidon}
\bigg\| \sum_k \alpha_k \sin 2\pi n_k x \bigg\|_\infty \ge C \sum_k \big| \alpha_k \big|.
\end{equation}
\end{theorem}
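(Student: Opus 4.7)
The plan is to exploit the classical Riesz-product technique, which parallels the ``selector'' mechanism behind our proof of Theorem~\ref{t.ssbi2}. Assume first the strengthened hypothesis $\lambda \ge 3$; the general case $\lambda > 1$ will reduce to this. Put $\varepsilon_k = \mathrm{sign}(\alpha_k)$ and define the finite Riesz product
\[
P(x) = \prod_k \bigl( 1 + \varepsilon_k \sin 2\pi n_k x \bigr).
\]
Each factor lies in $[0,2]$, so $P \ge 0$ pointwise on $[0,1]$; this plays the role of the non-negative ``selector function'' $\sum_j \mathbf{1}_{Q_j}$ in the two-dimensional argument of Section~\ref{s.2d}.

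Next I would expand $P$. A generic term is a product $\prod_{j\in S}\varepsilon_j \sin 2\pi n_j x$ over a finite set $S$, and iterated product-to-sum identities turn each such product into a linear combination of $\cos 2\pi m x$ with $m$ ranging over signed sums $\pm n_{j_1}\pm\cdots\pm n_{j_{|S|}}$. The key arithmetic claim, and the main obstacle, is that under $\lambda \ge 3$ no such signed sum with $|S|\ge 2$ coincides with any individual frequency $n_k$ or with $0$. This is a direct geometric-series estimate: $n_1 + \cdots + n_{k-1} \le \lambda n_{k-1}/(\lambda-1) < n_k$ as soon as $\lambda > 2$. Consequently the Fourier coefficient of $P$ at frequency $n_k$ is contributed only by the singleton term $\varepsilon_k \sin 2\pi n_k x$.

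Given this separation of frequencies, the conclusion is immediate. The constant term of $P$ equals $1$, so $\int_0^1 P\,dx = 1$, while with $f(x) = \sum_k \alpha_k \sin 2\pi n_k x$ one computes
\[
\int_0^1 P(x)\,f(x)\,dx = \tfrac12 \sum_k \varepsilon_k \alpha_k = \tfrac12 \sum_k |\alpha_k|.
\]
Since $P \ge 0$, dividing through yields $\|f\|_\infty \ge \tfrac12 \sum_k |\alpha_k|$, so one can take $C = \tfrac12$ under $\lambda \ge 3$.

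To obtain the full theorem for arbitrary $\lambda > 1$, I would choose $N$ with $\lambda^N \ge 3$ and partition the indices into the $N$ subsequences $I_b = \{aN + b : a \ge 0\}$, each of which gives a frequency subsequence lacunary with ratio $\ge 3$. Applying the Riesz-product argument to each $I_b$ separately produces $\sum_{k\in I_b}|\alpha_k| \le 2\|f\|_\infty$, provided one checks the small additional arithmetic fact that no $n_k$ with $k\notin I_b$ equals a nonzero signed sum of elements of $\{n_j\}_{j\in I_b}$ — another geometric-series bound, exploiting the gap between consecutive elements of $I_b$. Summing over $b$ yields \eqref{e.sidon} with $C = 1/(2N)$. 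The unifying mechanism — construct a non-negative test function whose inner product with $f$ isolates the ``diagonal'' sum $\sum|\alpha_k|$ while averaging out cross-interactions — is precisely the template whose two-dimensional geometric reincarnation, via indicators of carefully chosen dyadic cubes, drives our proof of Theorem~\ref{t.ssbi2}.
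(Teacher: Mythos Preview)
The paper does not give a self-contained proof of Theorem~\ref{t.sidon}; it is quoted as a classical result. What \S\ref{s.lac} does supply is a discussion of two proof strategies under strengthened lacunarity: the Riesz-product argument (for $\lambda\ge 3$), which it identifies with the classical proof and with Temlyakov's approach to the small ball inequality, and a nested-interval selection argument (sketched for $\lambda\ge 9$), which it presents as the direct conceptual ancestor of its own proof of Theorem~\ref{t.ssbi2}. Your proposal carries out the first of these in full and is correct for $\lambda\ge 3$; this is precisely the argument the paper alludes to but does not write out. The alternative the paper actually sketches is different in mechanism: rather than constructing a nonnegative test function and integrating, one inductively locates a nested sequence of intervals on which each term satisfies $\alpha_k\sin 2\pi n_k x\ge \tfrac12|\alpha_k|$, thereby exhibiting an extremal point directly. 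Your Riesz-product route yields the sharper lacunarity threshold and a clean constant; the nested-interval route is what genuinely mirrors the cube-selection mechanism of \S\ref{s.2d}, so your closing sentence slightly overstates the analogy.

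One caution on your reduction to general $\lambda>1$: the condition $\lambda^N\ge 3$ ensures that the higher-order frequencies of $P_b$ avoid $0$ and $\{n_j:j\in I_b\}$, but not necessarily every $n_k$ with $k\notin I_b$. With $\lambda^N=3$, a signed sum whose top term is $n_m$ (for $m\in I_b$) lies in $[n_m/2,\,3n_m/2]$, and this interval can contain $n_{m+1}$ whenever $\lambda\le 3/2$. The ``small additional arithmetic fact'' you flag therefore requires a slightly larger $N$ --- something like $\lambda^N>(2\lambda-1)/(\lambda-1)$ suffices --- rather than merely $\lambda^N\ge 3$.
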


The similarity becomes somewhat natural if one realizes that the two-dimensional  Haar functions   on dyadic boxes $\big\{ R\in \mathcal D^2:\, |R|=2^{-n} \}$  essentially form a one-parameter family (one of the two parameters is eliminated by the condition $ |R|=2^{-n}$) and their frequencies are dyadic, i.e. lacunary with $\lambda =2 $.

As pointed out earlier, the proof of \eqref{e.sbi2} given by Temlyakov in \cite{teml} utilizes Riesz products and is very similar to the proof of Sidon's theorem. More precisely,  Riesz products by themselves prove \eqref{e.sidon} for the case $\lambda \ge 3$. (Both proofs are described in detail in \cite{bilyk1,bilyk}.) In fact, the cubes chosen in our argument are precisely the support of the Riesz product constructed in \cite{teml}. 

We would like to point out that our argument is  similar to yet another proof of Sidon's theorem with large lacunarity constant. Indeed, if we assume that, say $\lambda \ge 9$, we can run the following induction argument. At each step $k$ look at the set $\big\{ x\in [0,1):\, \alpha_k \sin 2\pi n_k x \ge \frac12 \big| \alpha_k \big| \big\}$. This set consists of periodically repeating intervals of length $\frac{1}{3 n_k}$. Each of these intervals contains at least $3$ full periods of $\sin 2\pi n_{k+1} x$, in particular at the next step we can find a complete interval of length $\frac1{3n_{k+1}}$ on which $ \alpha_{k+1} \sin 2\pi n_{k+1} x \ge \frac12 \big| \alpha_{k+1} \big|$ within the previously chosen interval.  One can immediately recognize this idea in our proof of  inequality \eqref{e.ssbi2}.

\section{Two-dimensional small ball inequality and  low-discrepancy sets}\label{s.nets}

We now take a closer look at the structure of the set of $2^{n+1}$ squares of side length $2^{-(n+1)}$  which was constructed in the proof of Theorem \ref{t.sbi2}  and \ref{t.ssbi2}, in other words the set where $\displaystyle{\sum_{|R| = 2^{-n} } \varepsilon_R h_R (x)}$ achieves the maximal value, or equivalently, the set on which all the terms of $\displaystyle{\sum_{|R| = 2^{-n} } \alpha_R h_R (x)}$ are nonnegative.

 It has already been observed in \cite{bilyk} that in the case of positive coefficients, i.e. when   $\alpha_R \ge 0$ (or $\varepsilon_R = + 1$)  for all $R\in \mathcal D^2$ with $|R|=2^{-n}$, this set coincides with the  ``digit-reversing" Van der Corput set  \eqref{e.vdc} with $N=2^{n+1}$ points (here we frivolously identify small squares with points, namely their lower left corners). Indeed, given a dyadic rectangle $R$ of dimensions $2^{-k}\times 2^{-(n-k)}$, it is easy to see that $h_R(x) = + 1$ precisely for those points $x = \big(x^{(1)}, x^{(2)}\big) \in R$, for which the $(k+1)^{st}$ binary digit of $x^{(1)}$ coincides with the $(n-k+1)^{st}$ digit of $x^{(2)}$, i.e. both are $0$ or both are $1$. \\

We now turn to the case when the signs of the coefficients are completely arbitrary. Looking back at the proof of Theorem \ref{t.sbi2}, we recall that we found that every rectangle $R\in \mathcal D^2$ of area $2^{-n}$ contains precisely $2$ of the squares from our resulting set. This means exactly that this set is a $(1,n+1,2)$-net. However, in addition we know that these squares live in opposite quarters of $R$. Thus each dyadic rectangle $R$ of area $2^{-(n+1)}$ contains precisely one chosen square. Indeed, take any dyadic  ``parent" of $R$ (vertical or horizontal): it contains two squares, but in opposite quarters, so exactly one of them lives in $R$. We therefore arrive at the following result.

\begin{theorem}\label{t.net}
Let   $\varepsilon_R = \pm 1$  for $R\in \mathcal D^2$ with $|R|=2^{-n}$. Then the set of points,  on which the sum $\displaystyle{\sum_{|R| = 2^{-n} } \varepsilon_R h_R (x)}$ achieves its maximal value of $n+1$, consists of $2^{n+1}$ dyadic squares of side length $2^{-(n+1)}$, whose lower left corners form a binary $(0,n+1,2)$-net in base $2$, i.e. every dyadic box of area $2^{-(n+1)}$ contains precisely one cube from  this set.
\end{theorem}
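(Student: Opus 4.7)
The plan is to deduce Theorem~\ref{t.net} directly from a geometric invariant that was already established during the inductive construction in the proof of Theorem~\ref{t.sbi2}. That construction produces $2^{n+1}$ chosen squares $Q_1,\dots,Q_{2^{n+1}}$, each of side $2^{-(n+1)}$, with the following property: every dyadic rectangle $R\in\mathcal D^2$ of area $|R|=2^{-n}$ contains precisely two of them, and the two are placed in diagonally opposite quarters of $R$. I would begin by recording this invariant carefully, setting $\mathcal Q=\{Q_1,\dots,Q_{2^{n+1}}\}$ and letting $\mathcal P$ be the set of lower-left corners of the $Q_j$. Since each $Q_j$ is itself a dyadic square of area $2^{-2(n+1)}$, any dyadic box of area $2^{-(n+1)}$ contains a point of $\mathcal P$ if and only if it contains the corresponding full square of $\mathcal Q$; it therefore suffices to prove that every dyadic $R'$ with $|R'|=2^{-(n+1)}$ contains exactly one element of $\mathcal Q$.

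For such an $R'$, the plan is to choose a dyadic parent $R\supset R'$ of area $2^{-n}$ by doubling one of its two sides. This is always possible because $|R'|=2^{-(n+1)}<1$ and both sides of $R'$ are dyadic, so at least one has length at most $1/2$; the boundary case in which one side of $R'$ equals $1$ is handled by doubling the other, strictly shorter, side. Applying the invariant to $R$ supplies two chosen squares $Q_i,Q_j\subset R$ lying in diagonally opposite quarters of $R$, and by construction $R'$ is precisely one of the two halves of $R$ obtained by bisecting in the direction that was doubled.

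The finishing step is the elementary observation that any bisection of $R$ into halves---whether horizontal or vertical---separates the two diagonally opposite quarters into different halves; hence $R'$ contains exactly one of $Q_i,Q_j$, and therefore exactly one point of $\mathcal P$. Together with $|\mathcal P|=2^{n+1}$, this matches the definition of a $(0,n+1,2)$-net in base $2$. The only potentially nontrivial ingredient is the opposite-quarters invariant itself, but it is already maintained at every step of the induction that proves Theorem~\ref{t.sbi2} (compare Figure~\ref{f.chosen}); thus the present proof amounts to translating that invariant into the language of low-discrepancy nets.
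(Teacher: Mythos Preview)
Your argument is correct and follows essentially the same route as the paper: you invoke the ``two chosen squares in opposite quarters'' invariant established in the proof of Theorem~\ref{t.sbi2}, pass to a dyadic parent of any box of area $2^{-(n+1)}$, and observe that a single bisection must separate the two chosen squares. This is exactly the paper's proof, with the minor addition that you make the boundary case (one side of $R'$ equal to $1$) explicit.
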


It is easy to see that this construction covers the standard examples of two-dimensional nets:

\noindent $\bullet$ As discussed above, when $\varepsilon_R = +1$ for all  $R\in \mathcal D^2$ with $|R|=2^{-n}$, the arising net is the Van der Corput set with $N=2^{n+1}$ points \eqref{e.vdc}.  

\noindent $\bullet$ If $\varepsilon_R$ depends only on the geometry of $R$, i.e. $\varepsilon_R = \varepsilon (|R_1|, |R_2|)$, in other words the signs are the same for the whole ``layer" of disjoint dyadic rectangles of the same shape, then one obtains a digit-shifted Van der Corput set \eqref{e.dsvdc}. It is easy to see that  the shift is given by a sequence $\sigma$, for which $\sigma_k = 1$  precisely when the boxes  $R\in \mathcal D^2_{k-1}$, i.e. with $|R_1|=2^{-(k-1)}$, have coefficient $\varepsilon_R = -1$.

\noindent $\bullet$ If the coefficients have product structure, i.e. for $R= R_1 \times R_2$ we have $\varepsilon_{R_1} \cdot \varepsilon_{R_2}$, then the construction yields the so-called Owen's scrambling (\cite{owen}; see \cite[page 396]{DP}) of the Van der Corput set. Incidentally, the signed small ball inequality for coefficients with product structure has been recently proved in all dimensions \cite{karslidis}.

When the signs of the coefficients are arbitrary and not structured, our construction  provides a  much wider range of  of nets. Moreover, {\it{all}} nets can be constructed via this procedure. 

\begin{proposition}\label{p.allnets}
Let $\mathcal P$ be a $(0,n+1,2)$-net in base $2$ in which all points have coordinates of the form $k 2^{-(n+1)}$, where $k=0,1,...,2^{n+1} -1$. Then there exists a choice of coefficients $\varepsilon_R = \pm 1$, $R\in \mathcal D^2$, $|R|=2^{-n}$, such that $\displaystyle{\sum_{|R| = 2^{-n} } \varepsilon_R h_R (x)}$ achieves its maximal value of $n+1$ on the set $$\mathcal P + \big[0,2^{-(n+1)}\big)^2,$$ i.e. on a set, which consists of $2^{n+1}$ dyadic squares of side length $2^{-(n+1)}$, whose lower left corners are precisely the points of $\mathcal P$.
\end{proposition}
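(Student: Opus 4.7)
The plan is to reverse-engineer the signs $\varepsilon_R$ directly from the net $\mathcal P$. At any point $x$, the sum $\sum_{|R|=2^{-n}} \varepsilon_R h_R(x)$ has exactly $n+1$ terms, each equal to $\pm 1$ (one for each shape $2^{-k}\times 2^{-(n-k)}$, $k=0,\dots,n$), so attaining the value $n+1$ on the square $p+[0,2^{-(n+1)})^2$ is equivalent to requiring $\varepsilon_R h_R \equiv +1$ on that square for every dyadic rectangle $R$ of area $2^{-n}$ that contains it. Thus I would simply try to set, for each $R$ with $|R|=2^{-n}$,
\begin{equation}
\varepsilon_R := h_R(p) \qquad \text{for some } p\in \mathcal P \cap R,
\end{equation}
and the whole proof reduces to showing that (a) this definition is independent of the choice of $p$, and (b) with this choice the sum equals $n+1$ on the advertised set.

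The key geometric step is (a), which I would establish through the following observation. Fix $R$ with $|R|=2^{-n}$ and consider its decomposition into the two dyadic halves obtained by splitting $R_1$, and also into the two halves obtained by splitting $R_2$; each of these four halves is a dyadic rectangle of area $2^{-(n+1)}$. Since $\mathcal P$ is a $(0,n+1,2)$-net, each of these halves contains \emph{exactly one} point of $\mathcal P$. Therefore $|\mathcal P\cap R|=2$, and the two points $p_1,p_2$ lie in different left/right halves \emph{and} in different top/bottom halves of $R$, i.e.\ in diagonally opposite quarters of $R$. Because the two-dimensional Haar function $h_R=h_{R_1}(x_1)h_{R_2}(x_2)$ takes the same value on diagonally opposite quarters, we obtain $h_R(p_1)=h_R(p_2)$, so $\varepsilon_R$ is well-defined.

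For step (b), let $p\in\mathcal P$ and let $x\in p+[0,2^{-(n+1)})^2$. Because $p$ has coordinates of the form $k\,2^{-(n+1)}$, the small square $p+[0,2^{-(n+1)})^2$ is itself a dyadic cell of side $2^{-(n+1)}$; in particular, for every $R\ni p$ with $|R|=2^{-n}$, this cell is contained in the same quarter of $R$ as $p$ (the quarter has dimensions $2^{-(k+1)}\times 2^{-(n-k+1)}$, both at least $2^{-(n+1)}$, and dyadic alignment is automatic). Consequently $h_R(x)=h_R(p)$, hence $\varepsilon_R h_R(x)=\varepsilon_R h_R(p)=+1$. Summing the $n+1$ rectangles $R\ni p$ of area $2^{-n}$ (one per shape) gives
\begin{equation}
\sum_{|R|=2^{-n}} \varepsilon_R h_R(x) \;=\; \sum_{R\ni p,\,|R|=2^{-n}} \varepsilon_R h_R(x) \;=\; n+1,
\end{equation}
as desired.

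The only nontrivial point is the well-definedness argument in (a); everything else is bookkeeping. I expect the crux of the proof to be the geometric observation that in a $(0,n+1,2)$-net the two points lying in a rectangle of area $2^{-n}$ must occupy diagonally opposite quarters, which is exactly the mirror image of the inductive step carried out in the proof of Theorem~\ref{t.sbi2} (cf.\ Figure~\ref{f.chosen}). In some sense, this proposition is that argument run backwards: starting from an arbitrary net, one recovers the signs that make the net arise as the extremal set. $\square$
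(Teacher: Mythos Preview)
Your argument is essentially the paper's own proof, written out in slightly more detail: define $\varepsilon_R$ so that $\varepsilon_R h_R=+1$ at the net points of $R$, observe via the two ways of halving $R$ that the two points of $\mathcal P\cap R$ sit in diagonally opposite quarters (so the definition is consistent), and conclude that the sum equals $n+1$ on $\mathcal P+[0,2^{-(n+1)})^2$. The one thing you leave implicit is that the maximum is achieved \emph{only} on this set; the paper closes this by invoking Theorem~\ref{t.net} (the maximal set is always a $(0,n+1,2)$-net, hence consists of exactly $2^{n+1}$ squares, so it cannot properly contain $\mathcal P+[0,2^{-(n+1)})^2$), and you should make that final step explicit.
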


\begin{proof}   By the net property, any rectangle $R\in \mathcal D^2$ with $|R|=2^{-n}$ contains exactly two points of $\mathcal P$, as it is a  union of two rectangles of volume $2^{-(n+1)}$. Since $R$ can be divided into dyadic "children" in two ways (horizontally and vertically),  these points lie in the opposite quarters of $R$. We can now choose the sign $\varepsilon_R$ so that these points lie in the set where $\varepsilon_R h_R = +1$.

We thus construct coefficients for all dyadic boxes of area $2^{-n}$.   By construction, at every point $p$ of the net $\mathcal P$ together with the adjacent small square, i.e. on  the set $\mathcal P + \big[0,2^{-(n+1)}\big)^2$, we have $\displaystyle{\sum_{|R| = 2^{-n} } \varepsilon_R h_R (x)} = n+1$.  But Theorem \ref{t.ssbi2} implies that this is the maximal value of the sum, and Theorem \ref{t.net} dictates that the set where this maximum is achieved does not contain anything else.
\end{proof}

As a side perk, this characterization gives a simple way to compute the number of different nets. Set $m= n+1$. The set  $\{ R\in \mathcal D^2:\, |R|=2^{-n}\}$ contains $(n+1)2^n = m 2^{m-1}$ elements, and each $\varepsilon_R$ takes one of two values. We thus obtain the following:

\begin{corollary}\label{c.numbernets}
Let $m\in \mathbb N$. The number of different dyadic $(0,m,2)$-nets, in which all points have coordinates of the form $k2^{-m}$, $k=0,1,...,2^{m}-1$ is $\displaystyle{2^{m2^{m-1}}}$.
\end{corollary}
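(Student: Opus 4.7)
The plan is to read off the count as the number of sign patterns by exhibiting a bijection between the set of sign sequences $\{\varepsilon_R\}_{R\in \mathcal D^2,\, |R|=2^{-n}}$ (with $n = m-1$) and the set of $(0,m,2)$-nets of the type under consideration. Once the bijection is in place, the corollary reduces to elementary counting.

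First, I would define the map $\Phi$ which sends a sign pattern $\{\varepsilon_R\} \in \{\pm 1\}^{\{R\in\mathcal D^2:\,|R|=2^{-n}\}}$ to the net obtained as the set of lower left corners of the $2^{n+1}$ squares of side $2^{-(n+1)}$ on which $\sum_{|R|=2^{-n}} \varepsilon_R h_R(x) = n+1$. Theorem~\ref{t.net} guarantees that the image is genuinely a $(0,n+1,2)$-net with the required coordinate form, so $\Phi$ is well-defined. Proposition~\ref{p.allnets} gives surjectivity: every net $\mathcal P$ of the prescribed form arises as $\Phi(\{\varepsilon_R\})$ for some choice of signs.

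Next, I would verify that $\Phi$ is injective, which is the only substantive point. Suppose two sign sequences $\{\varepsilon_R\}$ and $\{\varepsilon'_R\}$ produce the same net $\mathcal P$. Fix any $R\in \mathcal D^2$ with $|R|=2^{-n}$; by the net property, $R$ contains exactly two points of $\mathcal P$, and, as noted in the proof of Proposition~\ref{p.allnets}, these two points lie in opposite quarters of $R$. Since $h_R$ equals $+1$ on one pair of opposite quarters and $-1$ on the other, there is a unique choice of sign $\varepsilon_R \in \{\pm 1\}$ that places these two points in the region where $\varepsilon_R h_R = +1$. Therefore $\varepsilon_R = \varepsilon'_R$ for every such $R$, proving injectivity. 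This is the step where a little care is needed, but it follows directly from the geometric observation already exploited in the earlier proofs.

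Finally, I would count the cardinality of the domain. A dyadic rectangle $R$ with $|R|=2^{-n}$ has shape $2^{-k}\times 2^{-(n-k)}$ for some $k\in\{0,1,\dots,n\}$, giving $n+1$ shapes. For each fixed shape, the number of such rectangles tiling $[0,1)^2$ equals $2^n$. Hence there are $(n+1)\cdot 2^n = m\cdot 2^{m-1}$ rectangles in total, each contributing a binary choice of sign, for a grand total of $2^{m\cdot 2^{m-1}}$ sign patterns. By the bijection $\Phi$ this is also the number of nets, which is the asserted formula. The main obstacle, such as it is, is the injectivity verification, but once one observes that the signs can be reconstructed locally on each $R$ from the geometry of the net, the argument is immediate. $\square$
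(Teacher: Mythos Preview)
Your proof is correct and follows essentially the same approach as the paper: count the sign patterns on the set $\{R\in\mathcal D^2:\,|R|=2^{-n}\}$ and invoke Theorem~\ref{t.net} and Proposition~\ref{p.allnets} to identify nets with sign sequences. The only difference is that you make the injectivity of $\Phi$ explicit, whereas the paper leaves it implicit in the construction of Proposition~\ref{p.allnets} (where the sign $\varepsilon_R$ recovered from the net is in fact uniquely determined); this added care is harmless and arguably clarifies the argument.
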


This fact has been previously established by Xiao \cite{xiao1,xiao2}, who also provided a different characterization of $(0,m,2)$-nets. In addition, almost simultaneously with us, Pillichshammer and coauthors  \cite{pil} obtained yet another algorithm of generating all dyadic $(0,m,2)$-nets.

\subsection{Extensions to general integer bases $b\ge 2$.} It is fairly straightforward to generalize these results to $b$-adic nets. Rather than using standard $b$-adic Haar functions, we shall utilize a specially constructed collection of functions, which generalizes dyadic Haar functions in a slightly different way adapted to our goal.

For an integer base $b\ge 2$, we define the system of $b$-adic subintervals of $[0,1)$ as $$\mathcal D_b = \big\{ \big[ k b^{-m}, (k+1)b^{-m} \big):\, m\in \mathbb N_0, \, k =0,1,\dots,b^{m}-1 \big\}.$$ Then $\mathcal D_b^d$ is the collection of $b$-adic boxes in $[0,1)^d$.

A given box $R \in \mathcal D_b^2$ of dimensions $b^{-m_1} \times b^{-m_2}$ can be represented as a union of a $b\times b$ array of $b$-adic boxes of dimensions $ b^{-(m_1+1)} \times b^{-(m_2+1)}$. We define the family of functions $\mathcal H_R$ as follows. The function $\phi_R\in \mathcal H_R$ if and only if the following conditions hold:

\noindent $\bullet$ $\phi_R$ takes values $\pm 1$ on $R$ and vanishes outside $R$.

\noindent $\bullet$ $\phi_R$ is constant on $b$-adic subboxes  of $R$ of dimensions $ b^{-(m_1+1)} \times b^{-(m_2+1)}$.

\noindent $\bullet$ In each row and in each column of the $b\times b$ array of $b$-adic subboxes of $R$ of dimensions $ b^{-(m_1+1)} \times b^{-(m_2+1)}$, there is  exactly one subbox, on which $\phi_R = +1$.

Obviously $\mathcal H_R$ contains $b!$ different functions. In addition, observe that when $b=2$, $\mathcal H_R$ consists of precisely 2 functions: $\pm h_R$. One can easily check that  our arguments directly extend to the $b$-adic  case. We summarize the results in the following theorem. For the sake of brevity, we shall call $(t,m,d)$-nets   in which all points have coordinates of the form $kb^{-m}$, $k=0,1,\dots, b^m-1$, {\it{standard}}.

\begin{theorem}\label{t.badic}
Fix the scale $m \in \mathbb N$ and an integer base $b\ge 2$. For each $b$-adic box $R\in \mathcal D_b^2$ with $|R| = b^{-(m-1)}$, choose a function $\phi_R \in \mathcal H_R$. Then
\begin{enumerate}[(i)]
\item A $b$-adic analogue of the signed small ball inequality holds:
\begin{equation}
\max_{x\in [0,1)^2} \sum_{\substack{R\in \mathcal D_b^2\\ |R| = b^{-(m-1)} }} \phi_R (x)  \,\,   = m.
\end{equation}
\item The set on which the maximum above is achieved has the form 
\begin{equation}
\mathcal P + \big[0, b^{-m} \big)^2,
\end{equation}
where $\mathcal P$ is a standard $(0,m,2)$-net in base $b$. 
\item Every standard $(0,m,2)$-net $\mathcal P$ in base $b$ may be obtained in this  way, i.e. for every such $\mathcal P$ there exists a choice of $\phi_R \in \mathcal H_R$ so that the set, where $\sum_{ |R| = b^{-(m-1)} } \phi_R$ attains its maximal value, is precisely $\mathcal P + \big[0, b^{-m} \big)^2$.
\item The number of different standard $(0,m,2)$-nets in base $b$ is $\displaystyle{(b!)^{mb^{m-1}}}$.
\end{enumerate}
\end{theorem}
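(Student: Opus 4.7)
The plan is to adapt the proofs of Theorems \ref{t.ssbi2} and \ref{t.net}, together with Proposition \ref{p.allnets}, essentially verbatim, making two notational substitutions: each choice $\phi_R \in \mathcal H_R$ encodes a permutation $\pi_R$ of $\{0,1,\ldots,b-1\}$ describing the transversal pattern of its $+1$-subboxes, and the phrase ``opposite quarters'' of the dyadic argument is replaced by ``transversal of the $b\times b$ subbox array''. Throughout I will work on the $b^{-m}$-grid, writing $x_1 = 0.a_1 a_2 \cdots a_m$ and $x_2 = 0.b_1 b_2 \cdots b_m$ in base $b$. A rectangle $R$ of area $b^{-(m-1)}$ and shape $(k, m-1-k)$ is determined by $a_1,\ldots,a_k$ and $b_1,\ldots,b_{m-1-k}$, and the condition $\phi_R(x) = +1$ translates into the single digit relation $b_{m-k} = \pi_R(a_{k+1})$.

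For parts (i) and (ii), the upper bound in (i) is immediate, since at every $x$ exactly one rectangle of each of the $m$ possible shapes contains $x$ and $\phi_R(x) \in \{\pm 1\}$. For the lower bound I will show that the maximizing set
$$\mathcal M = \bigl\{x \in [0,1)^2 : \phi_R(x) = +1 \text{ for every } R \ni x \text{ with } |R| = b^{-(m-1)}\bigr\}$$
consists of exactly $b^m$ squares of side $b^{-m}$ forming a $(0,m,2)$-net. The key observation is that, having fixed $x_1$, the digits of $x_2$ can be read off sequentially: $b_j$ arises from the shape-$(m{-}j)$ constraint, in which the relevant permutation depends only on $a_1,\ldots,a_{m-j}$ and $b_1,\ldots,b_{j-1}$, all already known. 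By symmetry $x_2$ also determines $x_1$, so $|\mathcal M| = b^m$. For (ii), given a $b$-adic rectangle of shape $(k,m{-}k)$ (so $a_1,\ldots,a_k$ and $b_1,\ldots,b_{m-k}$ are prescribed), I will solve the system in two phases: first use the shape-$k, k{+}1, \ldots, m{-}1$ constraints to determine $a_{k+1}, a_{k+2}, \ldots, a_m$ in order, then use the shape-$(k{-}1), (k{-}2), \ldots, 0$ constraints to determine $b_{m-k+1}, b_{m-k+2}, \ldots, b_m$ in order; at each step the required permutation depends only on data that have already been fixed or determined.

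For part (iii), given a standard $(0,m,2)$-net $\mathcal P$, each $R$ with $|R| = b^{-(m-1)}$ is a union of $b$ area-$b^{-m}$ boxes in either of two ways (horizontal or vertical $b$-fold subdivision), so the $b$ points of $\mathcal P$ in $R$ occupy exactly one ``row'' and one ``column'' of the $b \times b$ subbox array of $R$, i.e., they form a transversal. Declaring $\phi_R = +1$ on exactly these subboxes produces a legitimate element of $\mathcal H_R$, and with this choice the sum attains the value $m$ on $\mathcal P + [0, b^{-m})^2$; by (i)--(ii) this is the entire maximizer set. Part (iv) now follows: (ii) and (iii) together make the assignment $\{\phi_R\} \mapsto \mathcal P$ a bijection between $\prod_{|R|=b^{-(m-1)}} \mathcal H_R$ and the standard $(0,m,2)$-nets in base $b$, and since there are $m \cdot b^{m-1}$ rectangles $R$ with $|R| = b^{-(m-1)}$ and $|\mathcal H_R| = b!$ choices for each, the total count is $(b!)^{m b^{m-1}}$.

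The main obstacle I anticipate is the bookkeeping for the two-phase solution in part (ii) for intermediate shapes $(k, m{-}k)$: one must verify that at every stage the relevant constraint exposes exactly one previously unknown digit and that its permutation depends only on already-determined data, so that there are no circular dependencies and the solution is unique. Once this is checked, parts (iii) and (iv) are a clean transcription of the $b = 2$ arguments.
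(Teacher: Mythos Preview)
Your proof is correct. The paper does not spell out a proof of this theorem, only asserting that the dyadic arguments ``directly extend''; the intended extension is the inductive geometric construction of Theorems~\ref{t.ssbi2}--\ref{t.net}: pair shape-$k$ and shape-$(m{-}1{-}k)$ boxes, look at their common $b^{-k}$ square, and at each step retain the subcubes where both functions contribute $+1$, so that the chosen squares occupy a transversal of every $R$. Your route is genuinely different: you skip the pairing entirely and work with base-$b$ digit expansions, translating each condition $\phi_R(x)=+1$ into the single relation $b_{m-k}=\pi_R(a_{k+1})$. This recasts the net property in~(ii) as a triangular system of digit equations --- your two-phase solve is exactly the check that the system is triangular for every target shape $(k,m{-}k)$, and it does go through as you outline --- rather than a step-by-step geometric refinement. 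The payoff is that the bijection in~(iv) becomes completely explicit (the transversal of $\phi_R$ is forced by $\mathcal P\cap R$, so the map $\{\phi_R\}\mapsto\mathcal P$ is visibly injective), and no parity casework on $m$ is needed. The paper's geometric picture, by contrast, keeps the connection to the Riesz-product and lacunary-series viewpoints in~\S\ref{s.lac} more visible. Both approaches yield the same bijection and the same count.
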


We point out that part $(iv)$ has been previously proved by Xiao \cite{xiao1,xiao2}.

\section{Dimension reduction for the signed small ball inequality}

The signed small inequality in dimension $d$ may be deduced from a similar inequality in dimension $d-1$, where the summation condition $|R|=2^{-n}$ is replaced by the condition $|R|\ge 2^{-n}$.

\begin{lemma}\label{l.reduce}
Let $d\ge 2$. Assume that in dimension  $d' = d-1 \ge 1$ for all coefficients $\varepsilon_R= \pm 1$ we have the following inequality:
\begin{equation}\label{e.l2a}
\bigg\|  \sum_{|R| \ge 2^{-n}  } \varepsilon_R h_R \,\, \bigg\|_\infty \gtrsim n^{\frac{d'+1}{2}} = n^{\frac{d}{2}}.
\end{equation}

Then in dimension $d\ge 2$ for all coefficients $\varepsilon_R= \pm 1$ we have
\begin{equation}\label{e.l2}
\bigg\|  \sum_{|R| = 2^{-n} } \varepsilon_R h_R \,\, \bigg\|_\infty \gtrsim n^{\frac{d}{2}}.
\end{equation}

\end{lemma}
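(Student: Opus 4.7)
The plan is to fix the last coordinate $x_d$ of the test point and reduce the $d$-dimensional signed hyperbolic sum to the $(d-1)$-dimensional signed sum appearing in the hypothesis \eqref{e.l2a}. Every box $R\in\mathcal D^d$ with $|R|=2^{-n}$ factors uniquely as $R=R'\times R_d$ with $R'\in\mathcal D^{d-1}$ and $R_d\in\mathcal D$; if $|R_d|=2^{-j}$, then $|R'|=2^{-(n-j)}$ for some $j\in\{0,1,\dots,n\}$. Setting $k=n-j$, as $j$ sweeps $\{0,1,\dots,n\}$ the $(d-1)$-dimensional volume $|R'|=2^{-k}$ sweeps all values $2^0,2^{-1},\dots,2^{-n}$, so the collection of possible $R'$ is exactly $\{R'\in\mathcal D^{d-1}:|R'|\ge 2^{-n}\}$, with each box appearing exactly once.

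Next I would fix an arbitrary $x_d\in[0,1)$. For each scale $j$ there is a unique dyadic interval $I_j=I_j(x_d)\in\mathcal D$ of length $2^{-j}$ that contains $x_d$; for every other dyadic interval $R_d$ of that length $h_{R_d}(x_d)=0$, while $h_{I_j}(x_d)\in\{-1,+1\}$. Substituting and regrouping gives
\begin{equation*}
\sum_{|R|=2^{-n}}\varepsilon_R h_R(x',x_d) \;=\; \sum_{k=0}^{n}\,\sum_{\substack{R'\in\mathcal D^{d-1}\\ |R'|=2^{-k}}}\tilde\varepsilon_{R'}(x_d)\,h_{R'}(x'),
\end{equation*}
where $\tilde\varepsilon_{R'}(x_d):=\varepsilon_{R'\times I_{n-k}(x_d)}\cdot h_{I_{n-k}(x_d)}(x_d)\in\{-1,+1\}$. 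The right-hand side is precisely a $(d-1)$-dimensional signed hyperbolic sum ranging over all $R'$ with $|R'|\ge 2^{-n}$.

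Finally, with $x_d$ still fixed, I would apply the hypothesis \eqref{e.l2a} to the $\pm 1$ coefficients $\tilde\varepsilon_{R'}(x_d)$ in dimension $d'=d-1$: it produces some $x'\in[0,1)^{d-1}$ at which the absolute value of the displayed sum is $\gtrsim n^{d/2}$, and hence the $L^\infty$ norm on $[0,1)^d$ of the original sum satisfies \eqref{e.l2}. The whole argument is essentially clean bookkeeping, and the single point that needs to be checked is that the transformed coefficients remain signs; this is automatic because $h_{I_j(x_d)}(x_d)\in\{\pm 1\}$ and the original $\varepsilon_R$ are signs. This is exactly the feature of the signed problem that makes the reduction work, and it is the main obstacle to extending the same strategy beyond the $\varepsilon_R=\pm 1$ regime, consistent with the paper's remark that the arbitrary-coefficient analogue fails.
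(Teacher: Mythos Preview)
Your proof is correct and essentially identical to the paper's: the paper fixes the first coordinate $x_1$ and factors $R=R_1\times R'$, whereas you fix the last coordinate $x_d$ and factor $R=R'\times R_d$, but this is a cosmetic difference. The key observation---that freezing one coordinate collapses the hyperbolic sum $\{|R|=2^{-n}\}$ in $d$ dimensions to the full sum $\{|R'|\ge 2^{-n}\}$ in $d-1$ dimensions with new coefficients that are still $\pm1$---is exactly the paper's argument.
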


\begin{proof} The proof of this fact is based on the restriction of the sum in \eqref{e.l2} to a hyperplane. Indeed, write $x\in [0,1)^d$ as $x= (x_1, x')$ with $x_1 \in [0,1)$, $x' \in [0,1)^{d-1}$, and similarly write $R \in \mathcal D^d$ as $R= R_1 \times R'$, where $R_1\in \mathcal D$ and $R' \in \mathcal D^{d-1}$. We have  $|R|= 2^{-n}$ if and only if $|R'|\ge 2^{-n}$.   For a fixed $x_1 \in [0,1)$ and $k\ge 0$, there is exactly one dyadic interval $R_1 $ of length $2^{-k}$ containing $x_1$.

Therefore, with $x_1$ fixed, the sum in \eqref{e.l2} becomes $$ \sum_{|R| = 2^{-n} } \varepsilon_R h_R (x)  = \sum_{|R'| \ge 2^{-n} } \sum_{\substack{R_1\ni x_1 \\ |R_1|= 2^{-n}/ |R'| }} \Big[\varepsilon_{R_1 \times R'} h_{R_1} (x_1) \Big] h_{R'} (x')  = \sum_{|R'| \ge 2^{-n} }  \varepsilon'_{R'} h_{R'} (x').$$ Here we used the fact that the sum in the middle contains just one non-zero term  and we have set  $\varepsilon'_{R'} =  \varepsilon_{R_1 \times R'} h_{R_1} (x_1) = \pm 1$, where $R_1$ is uniquely determined by $R'$ and $x_1$. Since for fixed $x_1$, we have $\| f \|_{L^\infty (x) } \ge \| f (x_1, \cdot ) \|_{L^\infty (x')}$, the conclusion of the lemma follows.
\end{proof}

This lemma yields yet another very simple proof of the two-dimensional signed small ball inequality. Indeed, the one-dimensional estimate 
\begin{equation}\label{e.s1d}
\displaystyle{   \bigg\|  \sum_{|R| \ge 2^{-n}} \varepsilon_R h_R   \bigg\|_\infty \ge n+1 }
\end{equation}
 is almost obvious: it can be proved by choosing a nested sequence of dyadic intervals $R^{(k)}$ with $|R^{(k)}|= 2^{-k}$, $k=0,1,\dots,n$, on which all $\varepsilon_{R^{(k)}} h_{R^{(k)}} = +1$.\\
 
 Moreover, in the two-dimensional case, the converse implication in Lemma \ref{l.reduce} also holds, which follows from our proof of \eqref{e.ssbi2}.  Consider the one dimensional sum $\sum_{|R_2|\ge 2^{-n}} \varepsilon_{R_2} h_{R_2} (x_2)$.  For any two-dimensional  box $R = R_1 \times R_2$ with $|R| =2^{n}$ and such that $R_1$ contains zero (in which case $ h_{R_1} (0) = +1$), set $\varepsilon_R = \varepsilon_{R_2} $, and define the  signs arbitrarily for other boxes. Since the extremal set of the two dimensional sum  is a net, it contains a $2^{-(n+1)} \times 2^{-(n+1)}$ square in the box $\big[0,2^{-(n+1)} \big) \times [0,1)$, i.e. we know that the maximum is achieved, in particular when $x_1 = 0$. We then have 
 \begin{equation}
  \bigg\|  \sum_{|R_2| \ge 2^{-n}} \varepsilon_{R_2} h_{R_2} (x_2)  \bigg\|_\infty = \bigg\|  \sum_{\substack{R=R_1 \times R_2\\ |R| = 2^{-n}}} \varepsilon_{R} h_{R} (0,x_2)  \bigg\|_\infty =   \bigg\|  \sum_{|R|  =  2^{-n}} \varepsilon_R h_R   \bigg\|_\infty = n +1 .
 \end{equation}
 We do not know whether estimates \eqref{e.l2a} and \eqref{e.l2} are generally equivalent, i.e. whether Lemma \ref{l.reduce} can be reversed in dimension $d\ge 3$.


Summation over the set $\{ |R|\ge 2^{-n} \}$ has, in fact, been considered in \cite{teml}, where it was shown that in dimension $d=2$ 
\begin{equation}\label{e.tem}
 \bigg\| \sum_{ |R| \ge 2^{-n} }  \alpha_R  h_R \bigg\|_\infty \gtrsim 2^{-n} \sum_{|R|=2^{-n} } |\alpha_R |,
\end{equation}
which is slightly stronger than the two-dimensional version \eqref{e.sbi2} of the  {\it{small ball conjecture}} \eqref{e.sbc}: the right-hand side stays the same, but the left-hand side contains more terms. Observe that the signed version of  estimate \eqref{e.tem}, in all dimensions $d'\ge 1$, is trivial due to  orthogonality:
\begin{align}
\nonumber \bigg\|  \sum_{|R| \ge 2^{-n}  } \varepsilon_R h_R  \bigg\|_\infty & \ge  \bigg\|  \sum_{|R| \ge 2^{-n}  } \varepsilon_R h_R  \bigg\|_2  
\nonumber  = \Bigg( \sum_{k=0}^n \sum_{|R|=2^{-k} } 2^{-k} \Bigg)^{\frac12} \approx \Bigg( \sum_{k=0}^n  k^{d'-1} \Bigg)^{\frac12} \approx n^{d'/2},
\end{align}
where we have used the fact that $\# \{ R: \, |R|=2^{-k} \} \approx k^{d'-1}\cdot 2^k$. This naturally only implies the  $L^2$ bound for the signed small ball inequality in dimension $d=d'+1$, i.e.  $\displaystyle{ \bigg\|  \sum_{|R| \ge 2^{-n}  } \varepsilon_R h_R  \bigg\|_\infty \gtrsim n^{\frac{d-1}{2}} } $. The signed version of \eqref{e.tem} implies that one neither gains, nor loses by extending the range of summation from $\{ |R| = 2^{-n} \}$ to $\{ |R| \ge 2^{-n} \}$. In order to deduce the three-dimensional version of the conjecture, one should show that this extension actually gains  a factor of $\sqrt{n}$. This may be a viable approach to the three-dimensional signed small ball inequality, since one can use two-dimensional methods, however many of the difficulties are still preserved.




\subsection{The case of general coefficients} One may ask whether this ``dimension reduction" applies to the general small ball inequality \eqref{e.sbc}. We shall demonstrate that, while such a  reduction is formally possible, and moreover, the underlying one-dimensional estimate even yields a proof of the signed conjecture in all dimensions, there are intrinsic problems: already the one-dimensional case is false!!! This reveals   some  fundamental differences between the general and the signed inequalities.

In dimension $d'=1$ a proper analog of \eqref{e.l2a}, or of \eqref{e.s1d}, would be the following: 
\begin{equation}\label{e.2b}
\bigg\| \sum_{\substack{ I \in \mathcal D:\,  |I | \ge 2^{-n}}  } \alpha_I h_I \bigg\|_\infty \gtrsim \sum_{  |I | \ge 2^{-n}  }  \big| \alpha_I \big| \cdot |I|.
\end{equation}
This inequality  would easily  imply  the small ball inequality  \eqref{e.sbc}  in two dimensions. Indeed, fix $x_2$ for the moment. Then
\begin{align*}
\bigg\|  \sum_{|R| = 2^{-n} } \alpha_R h_R \,\, \bigg\|_{L^\infty (x_1) } &  = \bigg\| \sum_{|R_1|\ge 2^{-n}} \bigg( \sum_{|R_2| = 2^{-n}/|R_1|}  \alpha_{R_1\times R_2} h_{R_2} (x_2) \bigg) h_{R_1} (x_1) \bigg\|_{L^\infty(x_1)}\\
& \ge \sum_{|R_1|\ge 2^{-n}} \,\,  \bigg| \sum_{|R_2| = 2^{-n}/|R_1|}  \alpha_{R_1\times R_2} h_{R_2} (x_2) \bigg| \cdot |R_1|.
\end{align*}
Replacing the sup by the average in the second variable we find that
\begin{align*}
\bigg\|  \sum_{|R| = 2^{-n} } \alpha_R h_R \,\, \bigg\|_\infty  & \ge \sum_{|R_1|\ge 2^{-n}} \,\,  \bigg\| \sum_{|R_2| = 2^{-n}/|R_1|}  \alpha_{R_1\times R_2} h_{R_2}  \bigg\|_{L^1 (x_2) } \cdot |R_1|  \\
&  = \sum_{|R_1|\ge 2^{-n}} \,\,   \sum_{|R_2| = 2^{-n}/|R_1|}  |\alpha_{R_1\times R_2} | \cdot |R_2|  \cdot |R_1|  = 2^{-n} \sum_{|R|= 2^{-n} } |\alpha_R |,
\end{align*}
where we have used the fact that all the Haar functions inside the $L^1$ norm have disjoint support. Higher-dimensional analogs of this implication may also be formulated. 

\subsection{The one-dimensional estimate \eqref{e.2b} implies the signed small ball conjecture in all dimensions} Quite unexpectedly, the one-dimensional inequality \eqref{e.2b} would  actually imply the signed small ball inequality \eqref{e.ssbc} in {\emph{all dimensions}} $d\ge 2$ via the following argument.


Denote  $\displaystyle{  H_n =  \sum_{|R| = 2^{-n}} \varepsilon_R h_R}$.  We notice that  in dimension $d\ge 2$,  $  \| H_n \|_1 \gtrsim n^{\frac{d-1}2 }$. Indeed, we have $\| H_n \|_2 \approx \|H_n\|_4 \approx n^{\frac{d-1}2}$ ($L^2$ is just Parseval's identity, and $L^4$ follows from Littlewood--Paley: this is well explained in \cite{bilyk}). And a simple application of H\"{o}lder's inequality, namely $\|H_n\|_2 \le \|H_n\|_1^{1/3} \cdot \| H_n \|_4^{2/3}$, yields the answer. We are going to use this observation in dimension $d-1$.\\

Write $x\in [0,1)^d$ as $x=(x_1, x')$, where $x_1\in [0,1)$, $x' \in [0,1)^{d-1}$. Also, split any $R = R_1 \times R' \in \mathcal D^d$ in a similar way. Fix $x'$ for the moment. We have, using \eqref{e.2b}, that

\begin{align*}
\bigg\|  \sum_{|R| = 2^{-n} } \varepsilon_R h_R \,\, \bigg\|_{L^\infty (x_1) } & = \bigg\|  \sum_{|R_1| \ge 2^{-n} } \bigg( \sum_{|R'| = 2^{-n}/|R_1|}  \varepsilon_{R_1 \times R'} h_{ R'} (x') \bigg) h_{R_1} (x_1) \,\, \bigg\|_{L^\infty (x_1)}\\
& \ge \sum_{|R_1| \ge 2^{-n} } \bigg| \sum_{|R'| = 2^{-n}/|R_1|}  \varepsilon_{R_1 \times R'} h_{ R'} (x') \bigg| \cdot |R_1| \\
\end{align*}
We now replace the $L^\infty$ norm in the rest of the variables by $L^1$ and then use the observation above.

\begin{align*}
\bigg\|  \sum_{|R| = 2^{-n} } \varepsilon_R h_R \,\, \bigg\|_\infty & \ge  \sum_{|R_1| \ge 2^{-n} } \bigg\| \sum_{|R'| = 2^{-n}/|R_1|}  \varepsilon_{R_1 \times R'} h_{ R'} (x') \bigg\|_{L^1(x')} \cdot |R_1|\\
& \gtrsim \sum_{k=0}^n \sum_{|R_1| = 2^{-k} }  (n-k)^{\frac{d-2}{2}} \cdot 2^{-k} = \sum_{k=0}^n (n-k)^{\frac{d-2}{2}} \approx n^{d/2}.
\end{align*}

\noindent {\emph{Remark.}} The numerology of the signed small ball conjecture says that, roughly speaking, one should get $\sqrt{n}$ from every dimension. An $L^2$ estimate only gives one square root less. What we do here is apply the one-dimensional estimate \eqref{e.2b} to obtain $n$ from the first coordinate, then apply the $L^2$ bounds to get exponent $\frac{d-2}{2}$ from the other $d-1$ coordinates. A similar argument has recently been used in \cite{karslidis} to prove the signed conjecture in all dimensions, in the case when the signs have product structure $\varepsilon_R = \varepsilon_{R_1} \cdot \varepsilon_{R'}$. Moreover, a very similar idea has been exploited in \cite{skrig} to show that the discrepancy conjecture \eqref{e.dc2} holds for a random digit-shift of an arbitrary $N$-point set in $[0,1)^d$.

\subsection{The one-dimensional estimate \eqref{e.2b} is false.}\label{s.1dfalse} We shall now demonstrate that unfortunately the one-dimensional inequality \eqref{e.2b} does not hold in general. This sharply contrasts the signed case, where the corresponding inequality \eqref{e.s1d} is almost trivial.

The counterexample is constructed inductively with all coefficients $\alpha_I$ equal to zero or one. Start with the scale $k=0$ and assign $\alpha_{[0,1)}=1$. At each subsequent scale $k$, for every dyadic interval $J$ with $|J| = 2^{-k}$ and with not yet assigned coefficient, consider the modulus of the value of the sum of the previously chosen signed Haar functions $\displaystyle{\bigg| \sum_{|I|>2^{-k}} \alpha_I h_I \bigg|}$ on the interval $J$. If this value is at least $n^\frac23$, assign coefficient zero to $J$ as well as all of its ``children", i.e. $\alpha_J = 0$ and $\alpha_{J'} = 0$ for all $J'\subset J$. Otherwise, assign $\alpha_J = 1$.

In the end, by construction we would have that the left-hand side satisfies
\begin{equation}
\bigg\| \sum_{\substack{   |I | \ge 2^{-n}}  } \alpha_I h_I \bigg\|_\infty \le n^\frac23,
\end{equation}
since we suppressed all intervals, where the value may have been greater.

On the other hand, for the right-hand side we have 
\begin{equation}
\sum_{  |I | \ge 2^{-n}  }  \big| \alpha_I \big| \cdot |I| = \sum_{  |I | \ge 2^{-n}  }  \big| \alpha_I \big|^2 \cdot |I| = \bigg\| \sum_{\substack{   |I | \ge 2^{-n}}  } \alpha_I h_I \bigg\|_2^2
\end{equation}
by Parseval's identity. 


 If all coefficients are equal to one, the sum $\displaystyle{\sum_{\substack{   |I | \ge 2^{-n}}  } h_I}$ may be viewed as   a sum of $n+1$ independent $\pm 1$ random variables $\displaystyle{\sum_{\substack{   |I | = 2^{-k}}  } h_I}$,  where $k=0,1,\dots,n$. Thus, there exists a set of positive measure $c >0$, on which $\displaystyle{\bigg| \sum_{\substack{   |I | \ge 2^{-n}}  } h_I \bigg|  > c_1 \sqrt{n}}$.  
On those intervals, where we have set the coefficients equal to zero, by construction, we have $\displaystyle{\bigg| \sum_{\substack{   |I | \ge 2^{-n}}  } \alpha_I  h_I \bigg|  \ge n^{\frac23}\gg \sqrt{n}}$. Therefore, we still have $\displaystyle{\bigg| \sum_{\substack{   |I | \ge 2^{-n}}  }  \alpha_I h_I \bigg|  > c_1 \sqrt{n}}$ on a set of positive measure, i.e. $\displaystyle{\bigg\| \sum_{\substack{   |I | \ge 2^{-n}}  } \alpha_I h_I \bigg\|_2 \gtrsim \sqrt{n}}$.

Thus the left-hand side is at most $n^\frac23$, while the right-hand side is of the order $n \gg n^\frac23$. Therefore \eqref{e.2b} fails in this case.\\

{\bf{Acknowledgements:}} The authors would like to express gratitude to IMA and NSF: the research of the first author was supported by the NSF grant DMS 1101519, and the second author's stay at IMA was also supported by NSF funds.  The authors are also extremely  grateful to Ohad Feldheim  for contributing the idea of \S \ref{s.1dfalse}, and to Josef Dick and  Wolfgang Schmid for pointing out references \cite{xiao1,xiao2}.

\end{document}